\newtheorem{theorem}{Theorem}[section]
\newtheorem{lemma}[theorem]{Lemma}
\newtheorem{definition}[theorem]{Definition}
\begin{document}
\renewcommand{\baselinestretch}{1.2}
\markright{
}
\markboth{\hfill{\footnotesize\rm Karl Rohe, Tai Qin, Haoyang Fan
}\hfill}
{\hfill {\footnotesize\rm RMLE on SBM} \hfill}
\renewcommand{\thefootnote}{}
\let\thefootnote\relax\footnote{\textbf{Acknowledgements:} Thank you to the anonymous referees and the associate editor for their thoughtful comments that have added to the quality of this research.  Research supported in part by NIH Grant EY09946, NSF Grant DMS-0906818, and grants from WARF.}
$\ $\par
\fontsize{10.95}{14pt plus.8pt minus .6pt}\selectfont
\vspace{0.8pc}
\centerline{\large\bf The Highest Dimensional Stochastic Blockmodel}
\vspace{2pt}
\centerline{\large\bf with a Regularized Estimator}
\vspace{.4cm}
\centerline{Karl Rohe, Tai Qin and Haoyang Fan}
\vspace{.4cm}
\centerline{\it Department of Statistics, University of Wisconsin Madison}
\vspace{.55cm}
\fontsize{9}{11.5pt plus.8pt minus .6pt}\selectfont

\begin{quotation}
\noindent {\it Abstract:}
In the high dimensional Stochastic Blockmodel for a random network, the number of clusters (or blocks) $K$ grows with the number of nodes $N$.  Two previous studies have examined the statistical estimation performance of spectral clustering and the maximum likelihood estimator under the high dimensional model; neither  of these results allow $K$ to grow faster than $N^{1/2}$.  We study a model where, ignoring $\log$ terms, $K$ can grow proportionally to $N$.  Since the number of clusters must be smaller than the number of nodes, no reasonable model allows $K$ to grow faster; thus, our asymptotic results are the ``highest" dimensional.     To push the asymptotic setting to this extreme, we make additional assumptions that are  motivated by empirical observations in physical anthropology \citep*{dunbar1992neocortex}, and an in depth study of massive empirical networks \citep*{leskovec2008statistical}.  Furthermore, we develop a regularized maximum likelihood estimator that leverages these insights and we prove that, under certain conditions, the proportion of nodes that the regularized estimator misclusters converges to zero.  This is the first paper to explicitly introduce and demonstrate the advantages of statistical regularization in a parametric form for network analysis. \par

\vspace{9pt}
\noindent {\it Key words and phrases:}
Stochastic Block Model, regularization, clustering, consistency, high dimensional
\par
\end{quotation}\par

\fontsize{10.95}{14pt plus.8pt minus .6pt}\selectfont
\section{Introduction}

Recent advances in information technology have produced a deluge of data on complex systems with myriad interacting elements, easily represented by networks.  Communities or clusters of highly connected actors are an essential feature in a multitude of empirical networks,
and identifying these clusters helps answer vital questions in various fields.  Depending on the area of interest,  interacting elements may be metabolites, people, or computers. Their interactions can be represented in chemical reactions, friendships, or some type of communication. For example, a terrorist cell is a cluster in the communication network of terrorists; web pages that provide hyperlinks to each other form a community that may host discussions of a similar topic; a cluster in the network of biochemical reactions might contain metabolites with similar functions and activities. Networks (or graphs) appropriately describe these relationships.  Therefore, the substantive questions in these various disciplines are, in essence, questions regarding the structure of networks.  Given the demonstrated interest in making statistical inference from an observed network, it is essential to evaluate the ability of clustering algorithms to estimate the ``true clusters" in a network model. Understanding when and why a clustering algorithm correctly estimates the ``true communities" provides a rigorous understanding of the behavior of these algorithms and potentially leads to improved algorithms.  



The Stochastic Blockmodel is a model for a random network.  The ``blocks" in the model correspond to the concept of ``true communities" that we want to study.  In the Stochastic Blockmodel, $N$ actors (or nodes) each belong to one of $K$ blocks and the probability of a connection between two nodes depends only on the memberships of the two nodes \citep*{hollandkathryn1983stochastic}.  This paper  adds to the rigorous understanding of the maximum likelihood estimator (MLE) under the Stochastic Blockmodel.  


There has been significant interest in how  various clustering algorithms perform under the Stochastic Blockmodel (for example, \citet*{bickel2009nonparametric, rohe2011spectral, choi2010stochastic, bickel2011method,  zhao2011consistency,celisse2011consistency, channarond2011classification, flynn2012consistent,bickel2012asymptotic, sussman}). In a parallel line of research, several authors have studied clustering algorithms on the Planted Partition Model, a model nearly identical to the Stochastic Blockmodel. For example, \citet*{mcsherry2001spectral} studies a spectral algorithm to recover the planted partition and analyzes the estimation performance of this algorithm. \citet*{chaudhuri2012spectral} improves upon this algorithm by introducing a type of regularization  and proving consistency results under the planted partition model.

In the previous literature, two papers have studied the high dimensional Stochastic Blockmodel, where the number of blocks $K$ grows with the number of nodes $N$ \citep*{rohe2011spectral,choi2010stochastic}.   The impetus for a high dimensional model comes from two empirical observations. First,  \citet*{leskovec2008statistical}   found that in a large corpus of empirical networks, the tightest clusters (as judged by several popular clustering criteria) were no larger than 100 nodes,  even though some of the networks had several million nodes.  This result echoes similar findings in Physical Anthropology.
 \citet*{dunbar1992neocortex}  took various measurements of brain size in 38 different primates and found that the size of the neocortex divided by the size of the rest of the brain had a log-linear relationship with the size of the primate's natural communities.  In humans, the neocortex is roughly four times larger than the rest of the brain.  Extrapolating the log-linear relationship estimated from the 38 other primates, \citet*{dunbar1992neocortex}  suggests that humans do not have the social intellect to maintain stable communities larger than roughly 150 people (colloquially referred to as Dunbar's number).  \citet{leskovec2008statistical}   found a similar result in several other networks that were not composed of humans.   The research of \citet{leskovec2008statistical}   and \citet*{dunbar1992neocortex} suggests that the block sizes in the Stochastic Blockmodel should not grow asymptotically.  Rather, block sizes should remain fixed (or grow very slowly).

In the previous research of \citet*{rohe2011spectral} and \citet*{choi2010stochastic}, the average block size grows at least as fast as $N^{3/4}$ and $N^{1/2}$ respectively.  Even though these asymptotic results allow for $K$ to grow with $N$, $K$ does not grow fast enough.  The average block size quickly surpasses Dunbar's number.   In this paper, we introduce the  highest dimensional asymptotic setting that allows $K = N \log^{-5} N$ and $N/K = \log^{5} N$.  Thus, under this asymptoti setting, the size of the clusters grows much more slowly.  We call it the ``highest" dimensional because, ignoring the $\log$ term, $K$ cannot grow any faster.  If it did, then eventually $K>N$ and there would necessarily be blocks containing zero nodes.  To create a sparse graph, the out-of-block probabilities decay roughly as $\log^\gamma N/N$ in the highest dimensional setting, where $\gamma >0$ is some constant.   To ensure that a block's induced subgraph remains connected, the in-block probabilities are only allowed to decay slowly like $\log^{-1} N$. We show that under this asymptotic setting, a regularized maximum likelihood estimator (RMLE) can estimate the block partition for most nodes.


This paper departs from the previous high dimensional estimators of \citet*{rohe2011spectral} and \citet*{choi2010stochastic} by introducing a restricted parameter space for the Stochastic Blockmodel.  
In several high dimensional settings, regularization restricts the full parameter space providing a path to consistent estimators  \citep*{negahban2010unified}.  If the true parameter setting is  close to the restricted parameter space, then regularization trades a small amount of bias for a potentially large reduction in variance.  
For example, in the high dimensional regression literature, sparse regression techniques such as the LASSO restrict the parameter space to produce sparse regression estimators \citep*{tibshirani1996regression}. Several authors have also suggested parameter space restrictions for high dimensional covariance estimation, e.g. \citet*{fan2008high, friedman2008sparse, ravikumar2011high}.  Parameter space restrictions have also been applied in  Linear Discriminant Analysis 
\citep*{tibshirani2002diagnosis}.  In graph inference, previous authors have explored various ways of incorporating statistical regularization into eigenvector computations \citep*{chaudhuri2012spectral, amini2012pseudo, mahoney2010implementing, perry2011regularized, mahoney2012approximate}.

In this paper, we propose restricting the parameter space for the Stochastic Blockmodel.  These restrictions are supported by empirical observations \citep*{dunbar1992neocortex, leskovec2008statistical}, and they result in a statistically regularized estimator.   We will show that the RMLE is suitable in the highest dimensional asymptotic setting.  This work is distinct from previous approaches to regularization in graph inference because we study a parametric method, the MLE.  

\par

\section{Preliminaries}
\subsection{Highest Dimensional Asymptotic Setting}
In the Stochastic Blockmodel (SBM), each node belongs to one of $K$ blocks.  Each edge corresponds to an independent Bernoulli random variable where the probability of an edge between any two nodes depends only on the two nodes' block memberships \citep*{hollandkathryn1983stochastic}.  The formal definition is as follows.
\begin{definition}
For a node set $\{1,2,...,N\}$, let $P_{ij}$ denote the probability of including an edge linking node $i$ and $j$.   
Let $\tilde{z}$ : $\{1, 2,..., N\} \rightarrow \{1, 2,..., K\}$ partition the $N$ nodes into $K$ blocks. So, $\tilde{z}_i$ equals the block membership for node $i$. $\tilde z$ specifies all true clusters in the model. Let $\boldsymbol{\theta}$ be a $K \times K$ matrix where $\theta_{ab} \in [0, 1]$ for all $a, b$. Then $P_{ij} = \theta_{\tilde{z}_i\tilde{z}_j}$ for any $i,j = 1,2,...,n$.  So under the SBM, the the probability of observing adjacency matrix $A$ is 
\begin{equation*}
  P(A) =\prod_{i<j} {\theta_{ \tilde{z}_i \tilde{z}_j}}^{A_{ij}}{(1-\theta_{ \tilde{z}_i  \tilde{z}_j})}^{(1 - A_{ij})}.
\end{equation*}
The distribution factors over $i<j$ because we only consider undirected graphs without self-loops.
\end{definition}

  The highest dimensional asymptotic setting, defined in Definition \ref{def:hsbm}, restricts the parameters of the SBM in two ways.  First, because empirical evidence suggests that community sizes do not grow with the size of the network,  this setting allows $s$, defined to be the population of the smallest block, to grow very slowly.  
The second restriction ensures that the sampled networks will have sparse edges.  At a high level, there are two types of edges, ``in-block edges" that connect nodes in the same block and ``out-of-block edges" that connect nodes in different blocks.  In order to ensure sparse edges in the high dimensional setting, it is necessary that both the number of out-of-block edges and the number of in-block edges do not grow too fast.   To control the number of out-of-block edges, the off-diagonal elements of $\theta$ must be (roughly) on the order of $1/N$, otherwise the graph will be dense.  The definition allows a set $Q$ to prevent this restriction from becoming too stringent; if $(a,b) \in Q$, then $\theta_{ab}$ is not required to shrink as the network grows, allowing blocks $a$ and $b$ to have a tight connection.  As for the in-block edges, the slowly growing communities prevent these from creating a dense network;  the number of in-block edges connected to each node is bounded by the size of the block population.  As such, the highest dimensional asymptotic setting allows the probability of an in-block connection to remain fixed or decay slowly.  It is necessary to prevent these probabilities from converging to zero too quickly because in such small blocks, it would quickly erase any community structure.

\begin{definition} \label{def:hsbm}
The highest dimensional asymptotic setting is an SBM with the following asymptotic restrictions. 
\begin{enumerate}
\item[(R1)]  For $s$ equal to the population of the smallest block and $x_n = \omega(y_n) \Leftrightarrow  y_n/x_n = o(1)$,  \[s = \omega(\log^\beta N),\ \beta >4.\]
\item[(R2)] Let $(c,d)$ be the interval between $c$ and $d$ and let $Q$ contain a subset of the indices for $\boldsymbol{\theta}$.  For constants $C$ and $f(N) = o(s /\log N)$, 
$$\theta_{ab} = \theta_{ba} \in \left\{ \begin{array}{ll} (\log^{-1} N, 1 - \log^{-1} N)&a=b \\ (1/N^2, Cf(N)/N) & a < b, \{a,b\} \notin Q \\   (\log^{-1} N, 1 - \log^{-1} N)&  a < b, \{a,b\} \in Q. \\ 
\end{array} \right.$$
\end{enumerate}
\end{definition}


Assumption (R1) requires that the population of the smallest block $s = \omega(\log^\beta N), \beta >4$. This includes the scenario where each block size is very small (e.g. $o(\log^5 N)$). In this case, the expected degree for each node is $o(\log^5 N)$.  In the next sections we will introduce the RMLE and then show that it can identify the blocks under the highest dimensional asymptotic setting.

\subsection{Regularized Maximum Likelihood Estimator}

Under the highest dimensional asymptotic setting, 
the number of parameters in $\boldsymbol{\theta}$ is quadratic in $K$ and the sample size available for estimating each parameter in $\boldsymbol{\theta}$ is as small as $s^2$.  For tractable estimation in the ``large $K$ small $s$" setting, we propose an RMLE. 


Recall that  $\tilde{z}$ denotes the true partition.  Let $z$ denote any arbitrary partition.  The log-likelihood for an observed adjacency matrix $A$ under the SBM w.r.t node partition $z$ is
\begin{equation*}
L(A;z, \boldsymbol{\theta}) =\log P(A;z, \boldsymbol{\theta}) = \sum_{i<j}\{A_{ij}\log \theta_{ z_i z_j}+(1 - A_{ij})\log(1-\theta_{ z_i  z_j})\}.
\end{equation*}
For fixed class assignment $z$, let $N_a$ denote the number of nodes assigned to class $a$, and let $n_{ab}$ denote the maximum number of possible edges between class $a$ and $b$; i.e., $n_{ab}$ = $N_aN_b$ if $a  \neq b$ and $n_{aa} = {N_a \choose 2}$.  For an arbitrary partition $z$, the MLE of $\boldsymbol{\theta}$ is 
\[\boldsymbol{\hat{\theta}^{(z)}} = \arg \max_{\boldsymbol{\theta} \in [0,1]^{K \times K}} L(A; z, \boldsymbol{\theta}).\]
This is a symmetric matrix in the parameter space $\Theta = [0,1]^{K \times K}$. 
It is straightforward to show
\begin{equation*}
\hat{\theta}_{ab}^{(z)} = \frac{1}{n_{ab}}\sum_{i<j}A_{ij}1\{z_i = a,z_j = b\}, \quad \forall a,b = 1,2,...,K
\end{equation*}
By substituting $\boldsymbol{\hat{\theta}^{(z)}}$ into $L(A;z, \boldsymbol{\theta})$, we  get the profiled log-likelihood (\citet*{bickel2009nonparametric}).  Define
\begin{equation*}
L(A; z) = L(A; z, \boldsymbol{\hat{\theta}^{(z)}}). 
\end{equation*}
Define $\hat z = \arg \max_z L(A; z)$ as the MLE of $\tilde z$.  To define the RMLE, define the restricted parameter space, $\Theta^R \subset \Theta$, by the following regularization:
\begin{equation*}
\Theta^R = \left\{\boldsymbol{\theta} \in {[0, 1]}^{K \times K}:   \theta_{ab} = c, \ \forall  \ a \neq b \ \mbox{ and for  }  c \in [0,1] \right\}.
\end{equation*}
If $\theta \in \Theta^R$, then all  off-diagonal elements of $\theta$ are equal.  We call the new estimator ``regularized" because, where $\Theta $ has  $K(K+1)/2$ free parameters,  $\Theta^R$ has only $K+1$ free parameters. 

Given class assignment $z$, The RMLE $\boldsymbol{\theta}^{R,(z)}$ is the maximizer of $L(A;z, \boldsymbol{\theta})$ within $\Theta^R$. 
\begin{equation*}
\boldsymbol{\theta}^{R,(z)} = \arg \max_{\boldsymbol{\theta} \in \Theta^R} L(A; z, \boldsymbol{\theta}).
\end{equation*}
The optimization problem within $\Theta^R$ can be treated as an unconstrained optimization problem within ${[0, 1]}^{K+1}$ since we force the off-diagonal elements of $\boldsymbol{\theta}$ to be equal to some number $r$. It has a closed form solution:
\begin{equation*}
\hat{\theta}^{R,(z)}_{ab}  = \left\{ \begin{array}{ll} \hat{\theta}_{aa}^{(z)} = \frac{1}{n_{aa}}\sum_{i<j}A_{ij}1\{z_i = a,z_j = b\}&a=b,\\ \hat{r}^{(z)} = \frac{1}{n_{out}}\sum_{i<j}A_{ij}1\{z_i \neq z_j\}   & a \neq b.  \\ \end{array} \right.
\end{equation*}
Here $n_{out} = \sum_{a<b}n_{ab}$ is the maximum number of possible edges between all different blocks. The Regularized MLE for $\theta_{aa}$ is exactly the same as ordinary MLE, while the Regularized MLE for $\theta_{ab}, a \neq b$ is set to be equal to the total off-diagonal average. 
Finally, by substituting $\hat{\boldsymbol{\theta}}^{R,(z)}$ into $L(A;z, \boldsymbol{\theta})$, define the   regularized profile log-likelihood to be 
\begin{equation*}
L^R(A; z) = L(A; z, \hat{\boldsymbol{\theta}}^{R,(z)}) = \sup_{\boldsymbol{\theta} \in \Theta^R} L(A; z, \boldsymbol{\theta}),
\end{equation*}
and denote the RMLE of the true partition $\tilde{z}$ to be 
\begin{equation} \label{def:rmle}
\hat{z}^{R} = \arg \max_{z} L^R(A; z).
\end{equation}
\par

\section{Performance of the RMLE in the highest dimensional asymptotic setting}

Our main result shows that most nodes are correctly clustered by the RMLE under the highest dimensional asymptotic setting.  This result requires the definition of ``correctly clustered" from  \citet*{choi2010stochastic}. 
\begin{definition} \label{def:Ne}
For any estimated class assignment $z$, define $N_e(z)$ as the number of incorrect class assignments under $z$, counted for every node whose true class under $\tilde{z}$ is not in the majority within its estimated class under $z$.  
\end{definition}
The main result, Theorem \ref{main}, uses the KL divergence  between two Bernoulli distributions.  This is defined as
\[D(p \| q) = p \log \frac{p}{q} + (1-p) \log \frac{1-p}{1-q}.\]
Recall that under the highest dimensional asymptotic setting, $Q$ denotes the off diagonal indices of $\boldsymbol \theta$ that do not asymptotically decay.  Additionally, $n_{ab}$ denotes the total number of possible edges between nodes in block $a$ and nodes in block $b$.   Define $|Q|$ as the number of possible tight edges across different blocks, 
\begin{equation} \label{def:q}
|Q| = \sum_{\{a,b\}\in Q}n_{ab}.
\end{equation}

The following theorem is our main result.  It shows that under the highest dimensional asymptotic setting, the proportion of nodes that the RMLE misclusters converges to zero. 

\begin{theorem} \label{main}
Under the highest dimensional asymptotic setting in Definition \ref{def:hsbm}, $N$ is the total number of nodes, and $s$ is the population of the smallest block.  Assume that the set of friendly block pairs $Q$ (defined in R2 of Definition \ref{def:hsbm}) is small enough that $|Q| = o(Ns)$, where $|Q|$ is defined in Equation \ref{def:q}.  Furthermore, for the matrix of probabilities $\boldsymbol \theta$, assume that for any distinct class pairs $(a, b)$, there exists a class $c$ such that the following condition holds:
\begin{equation}\label{identifyAssumption} 
D\left(\theta_{ac}\| \dfrac{\theta_{ac}+\theta_{bc}}{2}\right) + D\left(\theta_{bc}\| \dfrac{\theta_{ac}+\theta_{bc}}{2}\right) \geq C \dfrac{MK}{N^2}
\end{equation}
Under these assumptions, RMLE $\hat{z}^R$ defined in Equation \ref{def:rmle} satisfies
\begin{equation*}
 \frac{N_e(\hat{z}^R)}{N} = o_p(1),
 \end{equation*}
 where $N_e(z)$ is the number of misclustered nodes defined in Definition \ref{def:Ne}.
\end{theorem}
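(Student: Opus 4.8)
Since $\hat z^{R}$ maximizes $L^{R}(A;\cdot)$, it suffices to show that, with probability tending to one, every partition $z$ with $N_{e}(z)\ge \epsilon N$ satisfies $L^{R}(A;z)<L^{R}(A;\tilde z)$ for each fixed $\epsilon>0$; this is the scheme of \citet{choi2010stochastic} for the ordinary MLE, carried out inside the restricted space $\Theta^{R}$. Write $H(p)=-p\log p-(1-p)\log(1-p)$ for the Bernoulli entropy, so that the closed form of the RMLE gives the exact identity $L^{R}(A;z)=-\sum_{a}n_{aa}H(\hat\theta^{R,(z)}_{aa})-n_{out}H(\hat r^{(z)})$. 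Let $\bar\theta_{aa}(z)=\mathbb{E}\,\hat\theta^{R,(z)}_{aa}$ and $\bar r(z)=\mathbb{E}\,\hat r^{(z)}$ be the corresponding population averages, let $\bar\theta^{R}_{z_{i}z_{j}}(z)$ denote $\bar\theta_{aa}(z)$ when $z_{i}=z_{j}=a$ and $\bar r(z)$ otherwise, and set $\bar L^{R}(z)=\mathbb{E}\,L(A;z,\bar\theta^{R}(z))$. Using the elementary identity $L(A;z,\hat\theta^{R,(z)})-L(A;z,\theta')=\sum_{a}n_{aa}D(\hat\theta^{R,(z)}_{aa}\|\theta'_{aa})+n_{out}D(\hat r^{(z)}\|r')$ with $\theta'=\bar\theta^{R}(z)$, I would decompose
\[
L^{R}(A;z)=\bar L^{R}(z)+W(z)+V(z),
\]
where $W(z)=\sum_{i<j}(A_{ij}-P_{ij})\log\frac{\bar\theta^{R}_{z_{i}z_{j}}(z)}{1-\bar\theta^{R}_{z_{i}z_{j}}(z)}$ is a centered sum of independent terms with coefficients bounded by $2\log N$, and $V(z)=\sum_{a}n_{aa}D(\hat\theta^{R,(z)}_{aa}\|\bar\theta_{aa}(z))+n_{out}D(\hat r^{(z)}\|\bar r(z))\ge 0$.

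\textbf{Step 1: population gap and regularization bias.}
Since $\mathbb{E}\,L(A;z,\theta)=-\sum_{i<j}[H(P_{ij})+D(P_{ij}\|\theta_{z_{i}z_{j}})]$, a short computation gives
\[
\bar L^{R}(\tilde z)-\bar L^{R}(z)=\sum_{i<j}D\!\left(P_{ij}\,\big\|\,\bar\theta^{R}_{z_{i}z_{j}}(z)\right)-\mathrm{bias}(\tilde z),\qquad
\mathrm{bias}(\tilde z)=\sum_{i<j}D\!\left(P_{ij}\,\big\|\,\bar\theta^{R}_{\tilde z_{i}\tilde z_{j}}(\tilde z)\right).
\]
The diagonal contributions to $\mathrm{bias}(\tilde z)$ vanish because the true in-block probabilities are homogeneous, so $\mathrm{bias}(\tilde z)$ is exactly the price of replacing the heterogeneous true off-diagonal probabilities by their common average $\bar r(\tilde z)$. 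By (R2) the non-friendly off-diagonal entries are at most $Cf(N)/N$ with $f(N)=o(s/\log N)$, and by hypothesis the $|Q|=o(Ns)$ friendly entries are too few to matter; I would show from this that $\bar r(\tilde z)$ is of smaller order than $s/N$ and that $\mathrm{bias}(\tilde z)$ is negligible compared with the signal term analysed in Step 3. This is the place where both the sparsity restriction (R2) and the bound on $|Q|$ enter, and it is the one genuinely new ingredient relative to the unregularized analysis.

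\textbf{Step 2: uniform control of the fluctuations.}
I would bound $W(z)$ and $V(z)$ uniformly over all at most $K^{N}$ partitions by Bernstein's inequality and a union bound. For $V(z)$ the $K$ diagonal summands are independent given $z$ and, since each in-block expected count $n_{aa}\bar\theta_{aa}(z)\ge s^{2}\log^{-1}N\to\infty$, each behaves up to lower order like a one-degree-of-freedom chi-square over two; a sub-exponential tail bound for their sum gives $\sup_{z}V(z)=O(K+N\log N)=o(Ns)$ with high probability, using $\log N=o(s)$. For $W(z)$, the variance of $W(z)-W(\tilde z)$ is of order $(\log N)^{2}\sum_{i<j}P_{ij}$, and Bernstein then gives $\sup_{z}|W(z)-W(\tilde z)|$ of order $N(\log N)\sqrt{(\text{max block size})\log K}$ with high probability, which is $o(Ns)$; here the condition $\beta>4$ in (R1) is used sharply. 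Combining Steps 1 and 2, with probability $1-o(1)$ every $z$ satisfies
\[
L^{R}(A;z)-L^{R}(A;\tilde z)\le-\sum_{i<j}D\!\left(P_{ij}\,\big\|\,\bar\theta^{R}_{z_{i}z_{j}}(z)\right)+o(Ns).
\]

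\textbf{Step 3: the signal lower bound (main obstacle).}
It remains to show that there is a constant $c>0$ with $\sum_{i<j}D(P_{ij}\|\bar\theta^{R}_{z_{i}z_{j}}(z))\ge c\,s\,N_{e}(z)$ for every partition $z$, since then the maximality of $\hat z^{R}$ forces $c\,s\,N_{e}(\hat z^{R})\le o(Ns)$, i.e.\ $N_{e}(\hat z^{R})/N=o_{p}(1)$. I would organize this around the confusion matrix $g_{aB}=\#\{i:\tilde z_{i}=a,\ z_{i}=B\}$, noting that $N_{e}(z)=\sum_{a}\sum_{B\notin P_{a}}g_{aB}$, where $P_{a}$ is the set of estimated blocks whose plurality class is $a$, and that the number of within-block pairs of an estimated block $B$ whose endpoints lie in different true classes is at least $\tfrac12|B|\,(|B|-\max_{a}g_{aB})$. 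Fix such a block $B$ with plurality class $b$ and a true class $a\neq b$ present in $B$. If $\{a,b\}\notin Q$, then $\theta^{*}_{ab}=O(f(N)/N)$ while the like-$b$ pairs in $B$ carry probability $\theta^{*}_{bb}>\log^{-1}N$, so whatever single value $\bar\theta^{R}_{BB}(z)$ takes, either the cross pairs or the like-$b$ pairs incur divergence of order $\log^{-1}N$; if instead $\{a,b\}\in Q$, I invoke the identifiability hypothesis \eqref{identifyAssumption}, which supplies a separating class whose connection profile to $a$ differs from its profile to $b$, forcing a divergence of order $MK/N^{2}$ on each merged pair. Summing these per-pair penalties over all mixed blocks, discarding the $o(Ns)$ contribution of friendly pairs once more via $|Q|=o(Ns)$, and handling estimated blocks smaller than $s$ separately (they contribute correspondingly little to $N_{e}$), should produce the claimed $\Omega(s\,N_{e}(z))$ bound. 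The delicate points — guaranteeing $\Omega(s)$ mixed partners per misclustered node after the small-block carve-out, avoiding double counting of pairs across blocks, and showing the weighted average $\bar\theta^{R}_{BB}(z)$ cannot be simultaneously close to every relevant true probability — are exactly where the bulk of the work lies, and this step is the principal obstacle of the proof.
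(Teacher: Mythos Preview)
Your Steps~1 and~2 closely track the paper's Lemmas~1--3: Lemma~1 is your uniform concentration bound (the paper controls $V(z)$ by a method-of-types counting argument rather than a sub-exponential tail, but the conclusion is the same $o_p(M)$), and Lemma~2 is precisely your regularization-bias estimate $\mathrm{bias}(\tilde z)=o(M)$; Lemma~3 combines them as you do.

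The real divergence is Step~3. The paper does \emph{not} lower-bound $\sum_{i<j}D(P_{ij}\,\|\,\bar\theta^{R}_{z_iz_j}(z))$ by a direct combinatorial attack. Instead it uses a \emph{refinement} device: for any partition $\Pi$ of $\{(i,j):i<j\}$ and any refinement $\Pi'$ one has $\bar L_P^*(\Pi)\le\bar L_P^*(\Pi')$. The regularized assignment $z$ induces a coarse partition $\Pi^{zR}$ with only $K+1$ cells; the paper refines it by extracting, for each triple $(i,j,k)\in T$ (where $(i,j)$ is a Choi-type misclassified pair with $z_i=z_j$, $\tilde z_i\ne\tilde z_j$, and $k$ lies in the separating class guaranteed by \eqref{identifyAssumption}), a two-element cell $\{(i,k),(j,k)\}$, and then further refines the residual off-diagonal cell into all ${K\choose 2}$ blocks. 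The result is exactly the unregularized refinement $\Pi^*$ of Choi et al., so their lemma (Lemma~4 here) gives $\bar L_P(\tilde z)-\bar L_P^*(\Pi^*)=\tfrac{N_e(z)}{N}\,\Omega(M)$, and the monotonicity chain $\bar L_P^R(z)\le\bar L_P^*(\Pi^{*R})\le\bar L_P^*(\Pi^*)$ transfers the bound to the regularized likelihood for free.

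Your direct route could in principle be completed --- the key missing inequality is that $D(p\|q)+D(p'\|q)\ge D(p\|\tfrac{p+p'}{2})+D(p'\|\tfrac{p+p'}{2})$ for every $q$, which lets you invoke \eqref{identifyAssumption} whether the separating-class node $k$ lands inside or outside the estimated block --- but as written there is a gap. Your argument is organized around a fixed estimated block $B$ and its internal pairs, yet when $\{a,b\}\in Q$ the separating class $c$ will typically lie \emph{outside} $B$, so the relevant pairs $(i,k),(j,k)$ carry the global estimate $\bar r(z)$, not $\bar\theta_{BB}(z)$; they simply do not appear in your per-block accounting. To use them you must step back to the Choi pairing $C_1$ and sum over triples, watching the double counting, which amounts to redoing the refinement argument by hand. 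The paper's route buys you this entire step for the price of one monotonicity lemma and the observation that $\Pi^*$ refines $\Pi^{*R}$, which in turn refines $\Pi^{zR}$.
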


This theorem requires two main assumptions.  The first main assumption is  $|Q| = o(Ns)$.  Define the number of expected edges $ M = \sum_{i<j} EA_{ij}$.   Under the highest dimensional asymptotic setting, this first assumption implies that $M$ grows slowly, specifically $M =  \omega(N(\log N)^{3+\delta})$, where $\delta >0$.  The second main assumption says that every distinct class pair $(a,b)$ has at least one class $c$ that satisfies Equation \ref{identifyAssumption}.  This assumption relates to the identifiability of $\tilde z$ under the highest dimensional asymptotic setting. For example, if $(a, b) \not \in Q$, then choosing $c = a$ satisfies the assumption in Equation \ref{identifyAssumption}, because $\theta_{aa}$ is large and $\theta_{ba}$ is small.  However, if $(a,b) \in Q$, then there should exist at least one class $c$ to make $\theta_{ac}$,$\theta_{bc}$ identifiable. Otherwise, blocks $a$ and $b$ should be merged into the same block.  Interestingly, this assumption is not strong enough to ensure that $\tilde z$ maximizes $E(L^R(A, \cdot))$, but this is not relevant for our asymptotic results.  If one is concerned about this abnormality, it would be enough to assume in R2 (in the definition of the highest dimensional asymptotic setting) that if $\{a,b\} \in Q$, then  $\theta_{ab} < \Delta$.  This ensures that the probabilities in the set $Q$ are smaller than the in-block probabilities.  Such an assumption does not change the asymptotic result.  

While theorem \ref{main} does not make an explicit assumption about the size of the largest block, Equation \ref{identifyAssumption} makes an implicit assumption because the size of the largest blocks affects the number of edges $M$.  Equation \ref{identifyAssumption} is satisfied when $MK/N^2 \rightarrow 0$ and the set $Q$ does not interfere.  For example, if $|Q| = 0$ and the largest block is $O(N^{1/2-\epsilon})$ for some $\epsilon > 0$, then Equation \ref{identifyAssumption} is satisfied.  

\par

\section{Simulations}
This section compares the RMLE's and the MLE's ability to estimate the block memberships in the Stochastic Blockmodel. In our simulations, the RMLE outperforms the MLE in a wide range of scenarios, particularly when there are several blocks and when the out-of-block probabilities are not too heterogeneous.

\subsection{Implementation}
Computing the exact RMLE and MLE is potentially computationally intractable owing to the combinatorial nature of the parameter space.  In this simulation, we fit the MLE with the pseudo-likelihood algorithm proposed in  \cite*{amini2012pseudo}.  A slight change to the pseudo-likelihood algorithm can fit the RMLE as well; immediately after the pseudo-likelihood algorithm updates $\theta^{(z)}$, we replace the off-diagonal elements with the average of the off-diagonal elements.  

In our simulations, we observed that without any further modification, fitting the RMLE with the adjusted pseudo-likelihood removes blocks from the estimate.  Specifically if data is generated from a model with 30 blocks and the algorithm is given $K = 30$, then the algorithm often returns a partition with only 25 (or so) non-empty sets.  Meanwhile, in the hundreds of simulations that we performed, only two times did the MLE remove a block.  On both of these instances, we re-ran the k-means step in the spectral initialization and gave the MLE this new initialization.  It then returned a partition with $K$ non-empty sets. The RMLE was much more persistent in returning a reduced partition.

Interestingly, even though the resulting RMLE estimate should be be at a disadvantage because it has fewer blocks, it has nearly identical misclustering performance compared to the MLE.  This suggests that the pseudo-likelihood RMLE discards meaningless blocks from the current iterate, a type of dimension reduction.  This provides the opportunity to escape local optima near the initialization.  

To illustrate the benefit of discarding meaningless blocks, the simulations below make a further adjustment to the pseudo-likelihood RMLE.  When the algorithm discards a block, it ``re-seeds" a new block.  This re-seeding is done by the following algorithm that was motivated by follow-up work to the current paper (see \cite{rohe2013blessing}):
\begin{enumerate}
\item Find the block (as defined by the current iteration of the partition) with the smallest empirical in-block probability.
\item For each node in this block, take its neighborhood and remove any nodes that do not connect to any other nodes in the neighborhood.  Call this the transitive neighborhood.
\item Combine both (1) the node with the largest transitive neighborhood and (2) this node's transitive neighborhood into a new block.
\end{enumerate}

Similarly to the suggestion in \cite*{amini2012pseudo}, we initialize the pseudo-likelihood algorithm with spectral clustering using the regularized graph Laplacian \citep*{chaudhuri2012spectral}. Specifically, it runs k-means on the top $K$ eigenvectors of the matrix $D_\tau^{-1/2} A D_\tau^{-1/2}$, where $D_\tau^{-1/2}$ is a diagonal matrix whose $i,i$th element is $1/\sqrt{D_{ii} + \tau}$.  $D_{ii} = \sum_j A_{ij}$ is the degree of node $i$ and tuning parameter $\tau$ is set to be the average degree of all nodes.  

\subsection{Numerical results}

This section contains two sets of simulations.  In the first set of simulations, $K$ is growing while everything else remains fixed.  The second set of simulations investigate the sensitivity of the algorithms to heterogeneous values in the off-diagonal elements of $\theta$.

\begin{figure}[htbp] 
   \centering
   \includegraphics[width=6in]{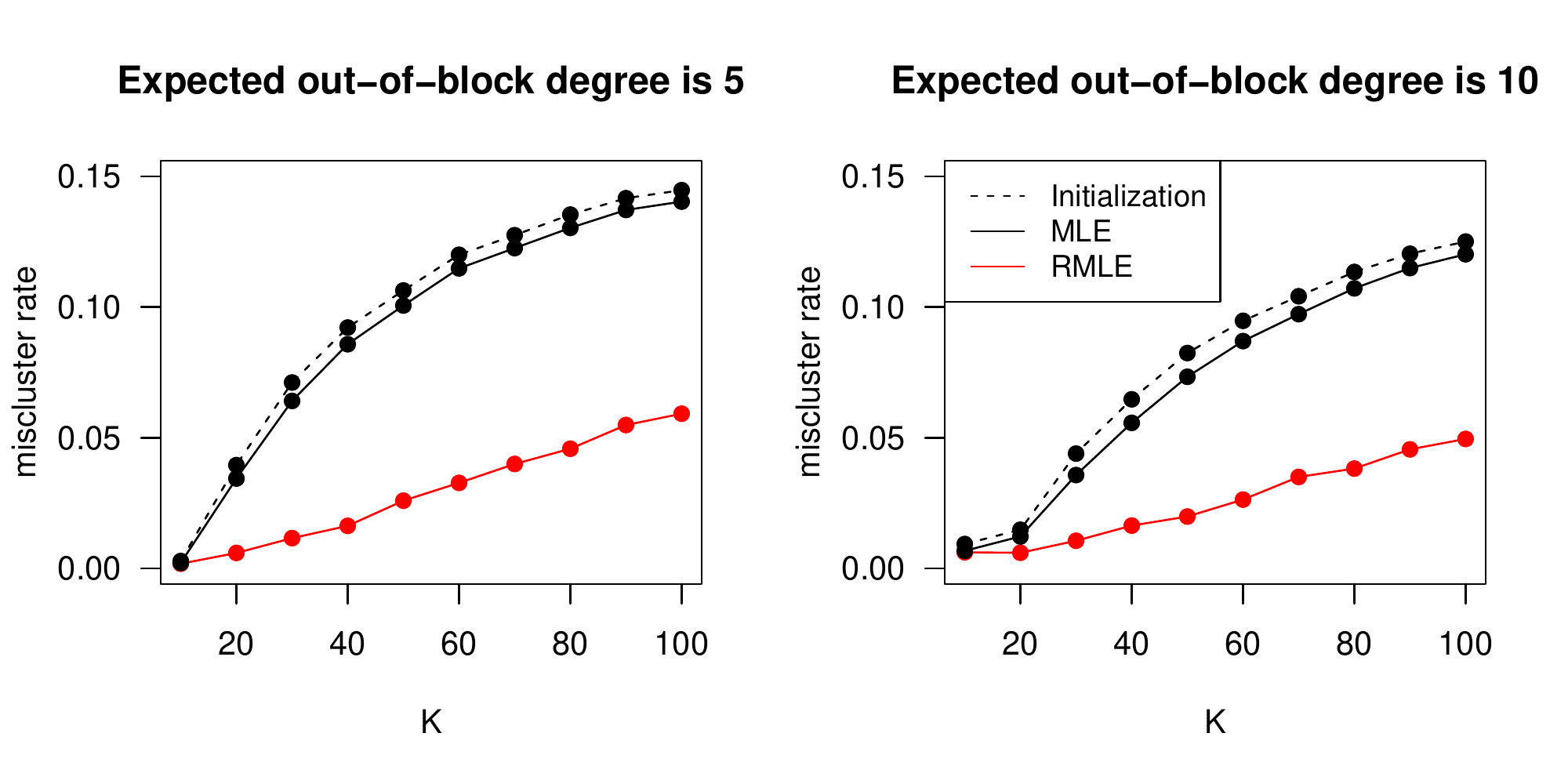}
   \caption{In this simulation, across a wide range of $K$, the RMLE misclusters fewer nodes than the MLE.  In each simulation, every block contains 20 nodes and $K$ grows from 10 to 100 along the horizontal axis.  The vertical axis displays the proportion of nodes misclustered.  Both algorithms are initialized with regularized spectral clustering and the results for this initialization are displayed by the dashed line.  The MLE makes minor improvements to the initialization, while the RMLE makes more significant improvements.  Each point in this figure represents the average of 300 simulations.  All methods were run on the same simulated adjacency matrices.}
   \label{fig:example1}
\end{figure}

The results in Figure \ref{fig:example1} compare the RMLE and MLE under an asymptotic regime that keeps the population of each block fixed at twenty nodes and simply adds blocks.  The horizontal axes corresponds to $K$ growing from ten to one hundred.  In both the left and the right panel, the probability of a connection between two nodes in the same block is $8/20$.  In the left panel, the probability of a connection between two nodes in separate blocks is $5/N$.  In the right panel, it is $10/N$.  Under these two asymptotics, the expected number of ``signal" edges connected to each node is eight, while the expected number of ``noisy" edges is either five or ten.   The vertical axis in both figures is  $N_e(\hat{z})/N$, the proportion of misclustered nodes.

The results in Figure \ref{fig:hetero} examine the sensitivity of the algorithms to deviations from the model in Figure \ref{fig:example1} that makes the off-diagonal elements of $\theta$ equal to one another.    In all simulations, the expected number of ``signal edges" per node is eight, the expected number of ``noisy edges" per node is 5, $s = 20$, and $K = 40$.  On the left side of Figure \ref{fig:hetero}, the off-diagonal elements of $\theta$ come from the Gamma distribution.  In the top left figure, the shape parameter in the Gamma distribution ($\alpha$) varies along the horizontal axis.  While the shape parameter varies, the rate parameter changes to ensure that each node has an expected out-of-block degree equal to five.\footnote{Since $\theta$ is now random, this expectation is taken over both $A$ and $\theta$.}  Under our scaling of the rate parameter, the variance of the Gamma distribution is proportional to $1/\alpha$.  As such, the small values of $\alpha$ make the out-of-block probabilities more heterogeneous, deviating further from the implicit model.  For a point of reference, recall that $\alpha = 1$ gives the exponential distribution.  Our simulations present $\alpha \in (.1, .55)$, more variable than the exponential distribution.  For values of $\alpha$ greater than $.18$, the RMLE outperforms the MLE.   The bottom left plot shows the top left $400\times 400$ submatrix of the adjacency matrix for a simulated example when $\alpha=.18$; the block pattern is clearly recognizable at this level of $\alpha$, suggesting that the RMLE is surprisingly robust to deviations from the implicit model.

The plots on the right side of Figure \ref{fig:hetero} are similar, except the off-diagonal elements of $\theta$ are scaled Bernoulli($p$) random variables.  Note that when $p=1$, this simulation would be identical to a setting in Figure \ref{fig:example1}.  The scaling ensures that the expected out-of-block degree is always five.  Here, the break-even point is around $p = .14$ and the bottom right figure shows the top left $400\times 400$ submatrix of the adjacency matrix for a sample when $p = .14$; the block pattern is clearly recognizable for this level of $p$.  In both of these cases, the RMLE appears robust to deviations from the implied model.  At the same time, for small levels of $p$ and $\alpha$, the MLE misclusters fewer nodes than the RMLE.

\begin{figure}[htbp] 
   \centering
   \includegraphics[width=6in]{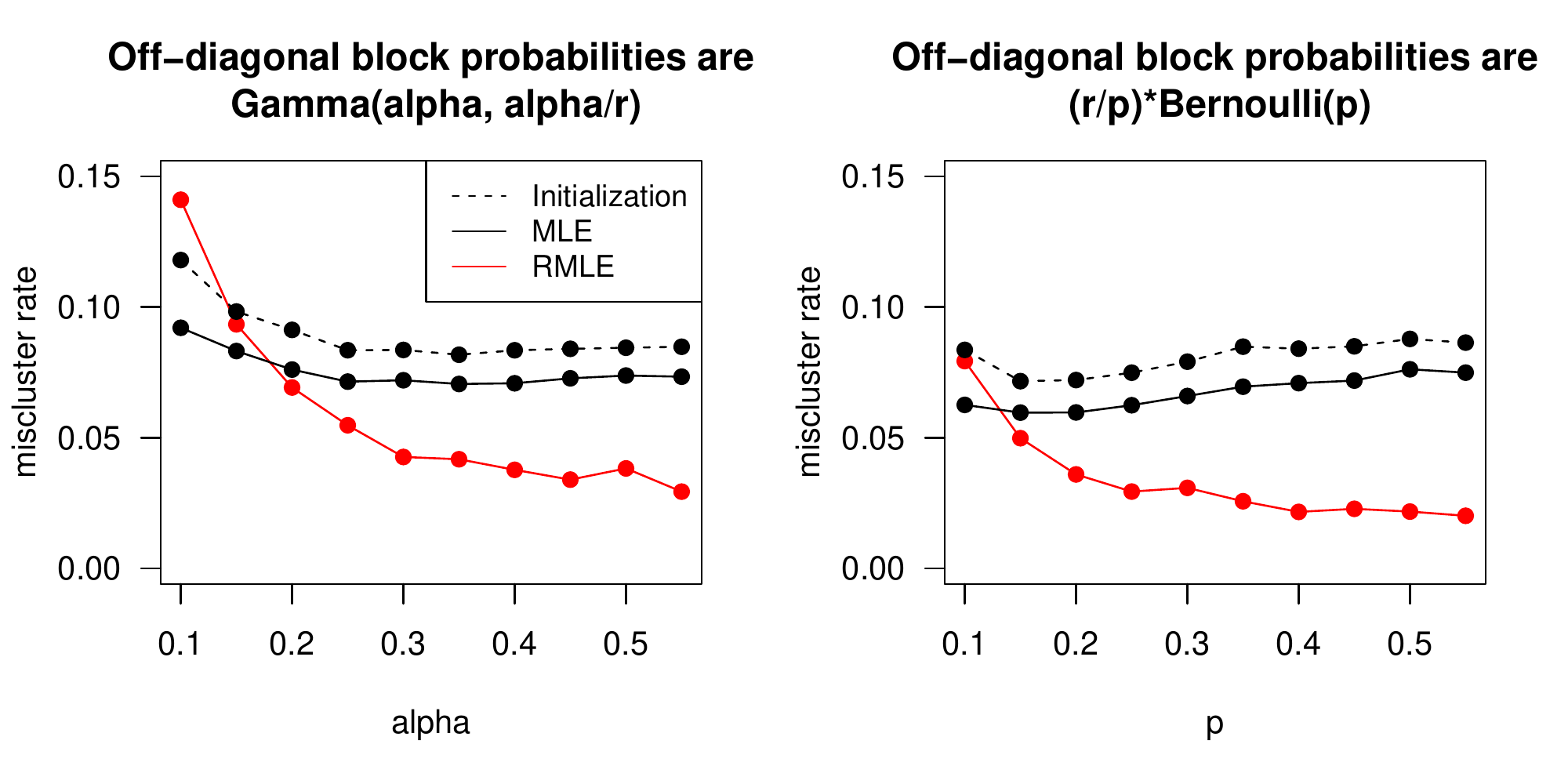}
   \includegraphics[width=6in]{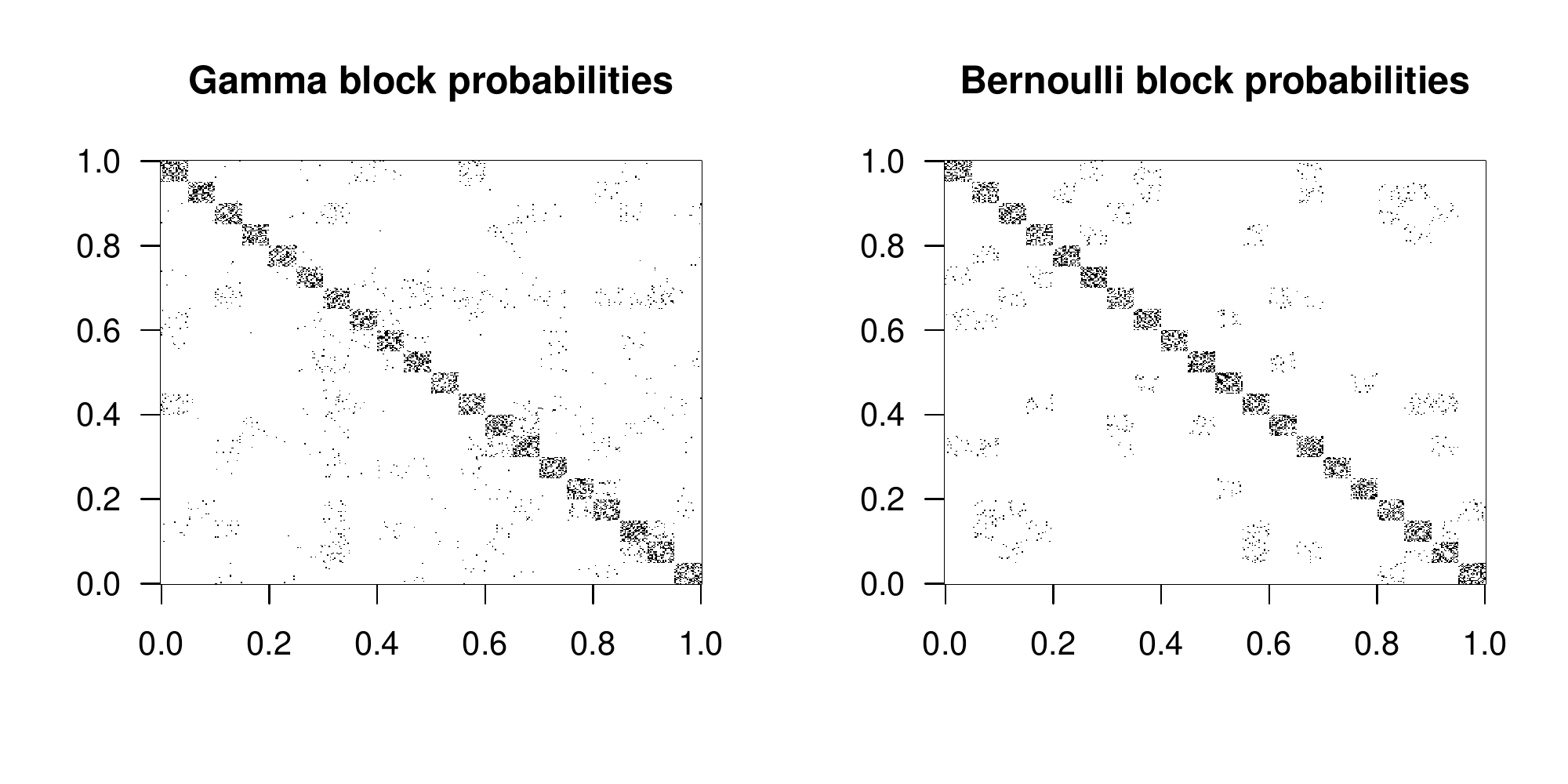}

      \caption{These figures investigate the sensitivity of the algorithms to deviations from the RMLE's ``implied model" that has homogeneous off-diagonal elements in $\theta$.  The top left figure displays results when these elements of $\theta$ come from the Gamma distribution with varying shape parameter. The top right figure displays results when these elements of $\theta$ come from the Bernoulli distribution with varying probability $p$. In both cases, adjustments are made so that each node has five expected out-of-block neighbors.  The bottom plots illustrate the how these heterogenous probabilities manifest in the adjacency matrix;  in both cases, $A$ is sampled with the parameterization that corresponds to the break-even point between the MLE and the RMLE.  Each point represents an average over 200 simulations}
   \label{fig:hetero}
\end{figure}

\section{Discussion}
This paper examines the theoretical properties of the regularized maximum likelihood estimator (RMLE) under the highest dimensional asymptotic setting, showing that under a novel and relevant asymptotic regime, regularization allows for weakly consistent estimation of the block memberships.

%
Under the highest dimensional asymptotic setting, the size of the communities grows at a poly-logarithmic rate, not at a polynomial rate,  aligning with several empirical observations
\citep*{dunbar1992neocortex, leskovec2008statistical}.  There are two natural implications of the block populations growing this slowly.
Under any Stochastic Blockmodel, to ensure the sampled graph has sparse edges, the probability of an out-of-block connection must decay.  In previous ``low-dimensional" analyses, it was also necessary for the probability of an in-block connection to decay.  The first clear implication of small blocks is that the probability of an in-block connection must stay bounded away from zero. Otherwise, a block's induced sub-graph will become disconnected.   The second implication of small block sizes is that the number of off diagonal elements in $\Theta$ grows nearly quadratically with $N$, while the number of in-block parameters (diagonal elements of $\Theta$) grows linearly with $N$.  

The proposed estimator, restricts the parameter space of the SBM in a way that leverages both of these implications. 
%
Since the out-of-block edge probabilities decay to zero, we maximize the likelihood over a parameter space that estimates the probabilities as equal.  Theorem \ref{main} shows that under the highest dimensional asymptotic setting and certain conditions that are similar to identifiability conditions, the RMLE can estimate the correct block for most nodes.  Correspondingly, the simulation section demonstrates the advantages of the RMLE over the MLE.  Overall, this paper represents a first step in applying statistically regularized estimators to high dimensional network analysis in a parametric setting.  Because of the computational issues involved in computing both the MLE and the RMLE, future work will propose a ``local estimator" that (1) incorporates the insights gained from the current analysis and (2) is computationally straight-forward.
\par

\section{Proof of the main result}
The  proof requires some additional definitions.  After giving these definitions, we will outline the proof.

Define the expectation of $\hat{\boldsymbol{\theta}}^{(z)}$ and $\hat{\boldsymbol{\theta}} ^{R,(z)}$ to be $\bar{ \boldsymbol{\theta}}^{(z)}$ and $\bar {\boldsymbol{\theta}}^{R, (z)}$. Define the expectation of $L(A; z, \boldsymbol{\theta})$ to be 
\begin{align*}
\bar{L}_P(z, \boldsymbol{\theta}) =E  [L(A; z, \boldsymbol{\theta})] = \sum_{i<j}\{P_{ij}\log \theta_{z_iz_j}+(1 - P_{ij})\log(1-\theta_{z_iz_j})\}.
\end{align*} 
Let $\bar{L}_P(z)$ to be the maximizer of $\bar{L}_P(z, \boldsymbol{\theta})$ over $\Theta$, and let $\bar{L}^R_P(z)$ to be the maximizer of $\bar{L}_P(z, \boldsymbol{\theta})$ over $\Theta^R$. That is, 
\begin{align}
&\bar{L}_P(z) = \bar{L}_P(z, \bar{\boldsymbol{\theta}}^{(z)}) = \sup_{\boldsymbol{\theta} \in \Theta} \bar{L}_P( z, \boldsymbol{\theta}), \\
&\bar{L}^R_P(z) = \bar{L}_P(z, \bar{ \boldsymbol{\theta}}^{R,(z)}) = \sup_{\boldsymbol{\theta} \in \Theta^R} \bar{L}_P( z, \boldsymbol{\theta}).
\end{align}
The proof of the main theorem is divided into five lemmas. The first step is to bound the difference between $\bar{L}_P(\tilde{z})$ and $\bar{L}_P^R(\hat{z}^R)$ (Lemma~\ref{lem:3}). Lemma~\ref{lem:1} and Lemma~\ref{lem:2} are two building blocks of Lemma~\ref{lem:3} . Lemma~\ref{lem:1} establishes a union bound of  $|L^R(A; z) - \bar L_P^R(z)|$ for any partition $z$. Lemma 2  shows that under the true partition $\tilde{z}$, the expectation of regularized likelihood  is close to the expectation of the ordinary likelihood. Lemma~\ref{lem:3} divides $\bar{L}_P(\tilde{z}) - \bar{L}_P^R(\hat{z}^R)$ into three parts and controls them respectively. We can see this as a bias-variance tradeoff;  we sacrifice some bias $\bar{L}_P(\tilde{z}) - \bar{L}_P^R(\tilde{z})$ to decrease the variance $\max_z |L^R(A; z) -  \bar L_P^R(z)|$.  After Lemma~\ref{lem:3}, it is necessary to develop the concept of regularized refinement, an extension of the refinement idea proposed in \cite{choi2010stochastic}.  Using the concept of regularized refinement, we can bound the error rate $N_e(\hat{z}^R)/N$ with a function of $\bar{L}_P(\tilde{z}) - \bar{L}_P^R(\hat{z}^R)$.  Lemma~\ref{lem:4} and Lemma~\ref{lem:5} use a new concept of regularized  refinement to connect the bounds on the log-likelihood with the error rate $N_e(\hat{z}^R)/N$.   From here on, we write $\hat\theta$ and $\bar\theta$ instead of $\hat\theta^{(z)}$ and $\bar\theta^{(z)}$ when the choice of $z$ is understood.

\begin{lemma} Let $M$ to be the total expected degree of A. That is, $M = \sum_{i<j} EA_{ij}$. \label{lem:1}
\begin{equation}
\max_z |L^R(A; z) - \bar L_P^R(z)| = o_p(M).
\end{equation}
\end{lemma}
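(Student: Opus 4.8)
The plan is to reduce the uniform deviation to a sum of independent block‑count fluctuations and then apply a Bernstein bound sharp enough to survive a union bound over all partitions. First I would use the closed form of the RMLE to rewrite, for every partition $z$ (writing $H(p)=-p\log p-(1-p)\log(1-p)$ and $\phi_m(x)=mH(x/m)$, and recalling that the maximized log‑likelihood of a single Bernoulli cell with $n$ trials and $x$ expected successes equals $-nH(x/n)$),
\[
L^R(A;z)=-\sum_{a}\phi_{n_{aa}}(\hat O_{aa})-\phi_{n_{out}}(\hat O_{out}),\qquad
\bar L^R_P(z)=-\sum_{a}\phi_{n_{aa}}(\bar O_{aa})-\phi_{n_{out}}(\bar O_{out}),
\]
where $\hat O_{aa}=\sum_{i<j}A_{ij}\mathbf{1}\{z_i=z_j=a\}$, $\hat O_{out}=\sum_{i<j}A_{ij}\mathbf{1}\{z_i\neq z_j\}$, and $\bar O_{aa},\bar O_{out}$ are their expectations (so $\hat\theta^{R,(z)}_{aa}=\hat O_{aa}/n_{aa}$, $\hat r^{(z)}=\hat O_{out}/n_{out}$, and the corresponding identities hold for $\bar{\boldsymbol\theta}^{R,(z)}$). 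Hence
\[
\bigl|L^R(A;z)-\bar L^R_P(z)\bigr|\le\sum_a\bigl|\phi_{n_{aa}}(\hat O_{aa})-\phi_{n_{aa}}(\bar O_{aa})\bigr|+\bigl|\phi_{n_{out}}(\hat O_{out})-\phi_{n_{out}}(\bar O_{out})\bigr|,
\]
and it suffices to control each difference of entropies of counts.

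Next I would prove the elementary deterministic inequality $|\phi_m(x)-\phi_m(y)|\le C\log N\,(|x-y|+1)$ for all $m\le\binom{N}{2}$ and $x,y\in[0,m]$. This rests on three facts: $\phi_m$ is concave with $\phi_m(0)=0$, hence monotone and subadditive on $[0,m/2]$, which gives $|\phi_m(x)-\phi_m(y)|\le\phi_m(|x-y|)$ when $x,y$ lie on the same side of $m/2$; $\phi_m$ is symmetric, $\phi_m(x)=\phi_m(m-x)$, which reduces the case $x,y\ge m/2$ to the previous one; and a second‑order Taylor expansion at $m/2$ (where $\phi_m'=0$ and $|\phi_m''|\asymp 1/m$) handles the remaining ``straddling'' case — combined with $H(p)\le p\log(1/p)+p$ and $\log m\le 2\log N$. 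Because the $O(1)$ remainders sum to $O(K)=o(M)$ (as $K\le N/s$ while $M\gtrsim Ns/\log N$), this yields, for $N$ large,
\[
\max_z\bigl|L^R(A;z)-\bar L^R_P(z)\bigr|\le C\log N\Bigl(\max_z\sum_a|\hat O_{aa}-\bar O_{aa}|+\max_z|\hat O_{out}-\bar O_{out}|\Bigr)+o(M).
\]

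It then remains to show that the two maxima on the right are $o_p(M/\log N)$. The key structural observation is that disjoint pairs $(i,j)$ feed disjoint block counts, so for fixed $z$ the variables $\hat O_{11},\dots,\hat O_{KK}$ are \emph{independent}, each a sum of independent Bernoullis with $\mathrm{Var}(\hat O_{aa})\le\bar O_{aa}$. Thus $S(z):=\sum_a|\hat O_{aa}-\bar O_{aa}|$ is a sum of at most $K$ independent nonnegative summands, each sub‑exponential with $O(1)$ scale, with $\sum_a\mathrm{Var}(\hat O_{aa})\le\sum_a\bar O_{aa}\le M$ and $\mathbb E S(z)\le\sum_a\sqrt{\bar O_{aa}}\le\sqrt{KM}$; Bernstein's inequality for sums of independent sub‑exponentials gives $\mathbb P(S(z)>\sqrt{KM}+t)\le\exp(-c\min(t^2/M,t))$, and ordinary Bernstein gives $\mathbb P(|\hat O_{out}-\bar O_{out}|>t)\le 2\exp(-\tfrac{t^2/2}{M+t/3})$. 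Choosing $t\asymp\sqrt{MN\log N}$ makes each of these probabilities at most $e^{-2N\log N}$ — possible precisely because $M=\omega(N\log^{3+\delta}N)$ forces $N\log N\ll t\ll M$ — so a union bound over the at most $K^N\le e^{N\log N}$ partitions shows that, with probability tending to $1$, $S(z)+|\hat O_{out}-\bar O_{out}|\le\sqrt{KM}+C'\sqrt{MN\log N}$ simultaneously over all $z$.

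Combining the pieces, with probability $\to 1$, $\max_z|L^R(A;z)-\bar L^R_P(z)|=O(\sqrt{KM\log^2 N}+\sqrt{MN\log^3 N})+o(M)$. Under the highest dimensional setting $M\ge\mathbb E(\text{in-block edges})\gtrsim Ns/\log N$, so $M=\omega(N\log^{3+\delta}N)$ gives $\sqrt{MN\log^3 N}=o(M)$, and $K\le N/s$ together with $M\gtrsim Ns/\log N$ gives $K\log^2 N=o(M)$ (equivalently $\log^3 N=o(s^2)$, which holds since $\beta>4$), hence $\sqrt{KM\log^2 N}=o(M)$; the lemma follows. The main obstacle is exactly this last balancing act: the error budget is only $o(M)$ yet we must beat a union bound over $e^{N\log N}$ partitions, so a crude McDiarmid/Hoeffding bound — whose additive ``$\sqrt{N\log N}$ per block,'' summed over up to $\sim N/\log^4 N$ blocks, would by itself exceed $M$ — is useless; one must treat $\sum_a|\hat O_{aa}-\bar O_{aa}|$ as a \emph{single} sub‑exponential sum, exploiting independence across blocks and the global variance bound $\sum_a\bar O_{aa}\le M$, and it is here that the lower bound $M=\omega(N\log^{3+\delta}N)$ (the content of $|Q|=o(Ns)$ together with R1--R2) is consumed. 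The logarithmic blow‑up of the entropy's derivative near $0$ and $1$ is a secondary nuisance, handled cleanly by the deterministic inequality in the second step.
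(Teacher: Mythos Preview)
Your argument is correct, but it takes a genuinely different route from the paper's. The paper, following \citet{choi2010stochastic}, uses the exact algebraic identity
\[
L^R(A;z)-\bar L_P^R(z)=\sum_{a}n_{aa}D(\hat\theta_{aa}\|\bar\theta_{aa})+n_{out}D(\hat r\|\bar r)+\bigl(X-E X\bigr),
\qquad X=\sum_{i<j}A_{ij}\log\frac{\bar\theta_{z_iz_j}}{1-\bar\theta_{z_iz_j}},
\]
which splits the deviation into a nonnegative KL part and a \emph{linear} fluctuation. The KL part is controlled by a method-of-types argument: there are at most $(N^2/2K)^{K+1}$ possible values of $\hat{\boldsymbol\theta}^{R,(z)}$, and each occurs with probability at most $\exp\{-\text{KL}\}$; no concentration inequality is needed here at all. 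Only the linear piece $X-EX$ requires Bernstein, and being linear in $A$ it is handled by the scalar bound directly. You instead absorb everything into a Lipschitz bound $|\phi_m(x)-\phi_m(y)|\le C\log N(|x-y|+1)$ on the entropy, reduce to $\sum_a|\hat O_{aa}-\bar O_{aa}|+|\hat O_{out}-\bar O_{out}|$, and then invoke the sub-exponential Bernstein inequality on the \emph{sum} of absolute block deviations, exploiting independence across blocks and the global variance budget $\sum_a\bar O_{aa}\le M$. Both arguments consume the hypothesis $M=\omega(N\log^{3+\delta}N)$ at the same place (to make a deviation of order $\sqrt{MN\log N}$ survive the $K^N$ union bound), and both land on $o_p(M)$. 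The paper's decomposition is slicker in that it sidesteps any analysis of the nonlinear entropy difference; your approach is more elementary and portable (it would work for other smooth profile criteria, not just Bernoulli likelihood), at the cost of the extra deterministic lemma and the vector sub-exponential bound. One small slip: the ``$+1$'' remainders in your Lipschitz bound, summed over $K$ blocks and multiplied by $\log N$, contribute $O(K\log N)$ rather than $O(K)$; this is still $o(M)$ since $K\log N\le (N/s)\log N$ and $M\gtrsim Ns/\log N$ with $s=\omega(\log^\beta N)$, $\beta>4$.
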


This proof follows a similar argument made in \citet*{choi2010stochastic}.
\begin{proof}
Let $H(p) = -p \log p -(1-p)\log(1- p)$, which is the entropy of a Bernoulli random variable with parameter $p$. Define $X = \sum_{i<j} A_{ij} \log\{ \bar{\theta}_{z_iz_j}/(1-\bar{\theta}_{z_iz_j}) \}$. Let $n_{ab}$ denote the maximum number of possible edges between all different blocks.
\begin{align*}
L^R(A; z) - \bar L_P^R(z) &= -\sum_{a = 1}^{K} n_{aa}(H(\hat{\theta}_{aa}) - H(\bar{\theta}_{aa})) - n_{out}(H(\hat{r}) - H(\bar{r}))\\
&= \sum_{a = 1}^{K} n_{aa}D(\hat{\theta}_{aa}\| \bar{\theta}_{aa}) + n_{out}D(\hat{r}\| \bar{r}) + X - E(X).
\end{align*}

For the first part $\sum_{a = 1}^{K} n_{aa}D(\hat{\theta}_{aa}\| \bar{\theta}_{aa})+ n_{out}D(\hat{r}\| \bar{r}) $, by similar argument as in \cite{choi2010stochastic}, we have that for every regularized estimator $\hat{\theta}^R$:

\begin{align*}
pr(\hat{\theta}^R) \le \exp \bigg\{ - \sum_{a = 1}^{K} n_{aa}D(\hat{\theta}_{aa}\| \bar{\theta}_{aa}) - n_{out}D(\hat{r}\| \bar{r}) \bigg\}.
\end{align*}
Let $\hat {\Theta}$ denote the range of $\hat{\theta}^R$ for fixed z. Then the total number of sets of values $\hat{\theta}^R$ can take is $|\hat{\Theta}| = (n_{out}+1)\cdot\Pi_{a=1}^{K} (n_{aa}+1)$. Notice that $\sum_{a=1}(n_{aa}+1) +(n_{out}+1) = \frac{N(N-1)}{2}+K+1$, we have $|\hat{\Theta}| \le (\frac{N(N-1)}{2(K-1)}+1)^{K+1}\le (\frac{N^2}{2K})^{(K+1)}$. 
Then $\forall \epsilon >0$,
\begin{align*}
pr\bigg\{ \sum_{a = 1}^{K} n_{aa}D(\hat{\theta}_{aa}\| \bar{\theta}_{aa})+ n_{out}D(\hat{r}\| \bar{r}) >\epsilon\bigg\}  &\le |\hat{\Theta}|e^{-\epsilon} \le  (\frac{N^2}{2K})^{(K+1)} e^{-\epsilon}\\
&\le \exp \bigg\{2(K+1)\log N - (K+1)\log(2K)-\epsilon\bigg\}.
\end{align*}

 For the second part $X-E(X)$, each $X_{ij} = A_{ij} \log \{ \bar{\theta}_{z_iz_j}/(1-\bar{\theta}_{z_iz_j}) \}$ is bounded in magnitude by $C = 2\log N$. By the following concentration inequality:
 \begin{align*}
 pr\{ |X - E(X)| \ge \epsilon\} \le 2\exp \bigg\{ -\frac{\epsilon^2}{2\sum_{i<j}E(X_{ij}^2)+ (2/3)C\epsilon}\bigg\}.
 \end{align*}
 Here $\sum_{i<j}E(X_{ij}^2) \le 4M \log^2 N$. Finally, by a union bound inequality over all partition $z$, we have:
 \begin{align*}
 pr\{\max_z |L^R(A; z) - \bar L_P^R(z)|\ge 2\epsilon M\} \le& \exp\{ N \log K + 2(K+1)\log N - (K+1)\log(2K) -M\epsilon\} \\
 & + 2\exp \bigg \{ N \log K - \frac{\epsilon^2M}{8\log^2 N+ (4/3)\epsilon \log N} \bigg \}.
 \end{align*}
 Notice that in this asymptotic setting , the total expected degree $ M = \omega(N(\log N)^{3+\delta})$. Then, $\max_z |L^R(A; z) - \bar L_P^R(z)| = o_p(M)$.
 \end{proof}
 
\begin{lemma}\label{lem:2} Under the true partition $\tilde{z}$, $\bar{L}_P(\tilde{z}) - \bar{L}_P^R(\tilde{z}) = o(M).$
\end{lemma}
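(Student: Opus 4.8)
The plan is to reduce the lemma to a deterministic bound on a sum of Kullback--Leibler divergences and then control that sum using the parameter ranges of Definition~\ref{def:hsbm}.

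\emph{An exact formula for the bias.} Under the true partition the expected estimators coincide with the truth, $\bar{\boldsymbol{\theta}}^{(\tilde z)} = \boldsymbol{\theta}$, whereas $\bar{\boldsymbol{\theta}}^{R,(\tilde z)}$ keeps the diagonal entries $\theta_{aa}$ and replaces every off-diagonal entry by the $n_{ab}$-weighted average $\bar r := n_{out}^{-1}\sum_{a<b} n_{ab}\theta_{ab}$ (this $\bar r$ is precisely the value of the common off-diagonal coordinate that maximizes $\bar L_P(\tilde z,\cdot)$ over $\Theta^R$, equivalently the minimizer of $c\mapsto\sum_{a<b}n_{ab}D(\theta_{ab}\|c)$). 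Substituting these into $\bar L_P(\tilde z,\cdot)$ and using the identity $p\log\theta+(1-p)\log(1-\theta)=-H(p)-D(p\|\theta)$ gives $\bar L_P(\tilde z)=-\sum_{a\le b}n_{ab}H(\theta_{ab})$ and $\bar L^R_P(\tilde z)=-\sum_a n_{aa}H(\theta_{aa})-n_{out}H(\bar r)$, hence
\[
\bar L_P(\tilde z)-\bar L^R_P(\tilde z)=n_{out}H(\bar r)-\sum_{a<b}n_{ab}H(\theta_{ab})=\sum_{a<b}n_{ab}\,D(\theta_{ab}\|\bar r)\ \ge\ 0 .
\]
This makes the promised bias--variance picture precise: the bias is a single Jensen gap, and it remains to show the right-hand side is $o(M)$.

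\emph{Bounding the divergence sum.} I would split the sum over the non-friendly pairs $\{a,b\}\notin Q$ and the friendly pairs $\{a,b\}\in Q$. For $\{a,b\}\notin Q$ both $\theta_{ab}\in(1/N^2,Cf(N)/N)$ and $\bar r$ are of order at most $f(N)/N$, so $D(\theta_{ab}\|\bar r)$ is well controlled by the $\chi^2$-type bound $D(p\|q)\le (p-q)^2/(q(1-q))$; combining this with the elementary lower bound $\bar r\ge n_{out}^{-1}\sum_{\{a,b\}\notin Q}n_{ab}\theta_{ab}$ to absorb the $1/\bar r$ factor yields a contribution of order $f(N)\,n_{out}/N=O(f(N)N)$, which is $o(sN/\log N)$ because $f(N)=o(s/\log N)$. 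For $\{a,b\}\in Q$ the probability $\theta_{ab}$ is bounded away from $0$ and $1$ while $\bar r$ is small, so $\bar r<\theta_{ab}$ and $D(\theta_{ab}\|\bar r)\le\theta_{ab}\log(1/\bar r)+O(\bar r)$; summing over $Q$ and using $\bar r\ge n_{out}^{-1}|Q|\log^{-1}N$ (valid since $\theta_{ab}>\log^{-1}N$ on $Q$) to bound $\log(1/\bar r)=O(\log N)$ gives a contribution of order $|Q|\log N$ up to lower-order terms. Finally, the crude lower bound $M\ge\sum_a n_{aa}\theta_{aa}>\log^{-1}N\sum_a\binom{N_a}{2}\ge (s-1)N/(2\log N)$ --- which uses only that in-block probabilities stay above $\log^{-1}N$ and that every block has at least $s$ nodes --- is what one plays against the smallness assumption $|Q|=o(Ns)$ to conclude that both contributions are $o(M)$.

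The main obstacle is the friendly-pair term: there $\theta_{ab}$ is of constant order but $\bar r$ can be as small as $\Theta(N^{-2})$, so the crude estimate of this term carries logarithmic factors, and turning it into $o(M)$ forces one to track those factors carefully and to balance the restriction on $|Q|$ against the $\Omega(sN/\log N)$ lower bound on $M$ coming from the in-block edges. By contrast, for the non-friendly pairs the two arguments of the divergence are comparable, the $\chi^2$ bound loses essentially nothing, and that estimate is routine.
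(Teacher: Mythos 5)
Your route is the same as the paper's: write the bias exactly as $\sum_{a<b} n_{ab} D(\theta_{ab}\|\bar r)$ with $\bar r$ the $n_{ab}$-weighted average of the off-diagonal $\theta_{ab}$, split the sum over friendly pairs in $Q$ and non-friendly pairs, bound each piece, and compare against the lower bound on $M$ coming from the in-block edges. Your treatment of the non-friendly pairs is fine and in fact slightly sharper than the paper's: the $\chi^2$-type bound combined with $\bar r \ge n_{out}^{-1}\sum_{\notin Q} n_{ab}\theta_{ab}$ gives $O(Nf(N))$, whereas the paper's direct logarithmic bound gives $O(Nf(N)\log N)$; either way this piece is $o(M)$.

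The one place your chain does not close as written is the friendly-pair term. You (correctly) bound it by $O(|Q|\log N)$ --- and indeed $D(\theta_{ab}\|\bar r)$ genuinely is of order $\log N$ when $\theta_{ab}$ is of constant order and $\bar r$ is polynomially small, so no constant-order bound is available here --- but you then set this against $M = \Omega\bigl(Ns/\log N\bigr)$ and the hypothesis $|Q| = o(Ns)$. That gives $|Q|\log N = o(Ns\log N)$, which is not $o(Ns/\log N)$; you are short by a factor of $\log^2 N$, and closing it would require the stronger assumption $|Q| = o(Ns/\log^2 N)$. For what it is worth, the paper's own proof elides exactly this point: it asserts the per-pair divergence on $Q$ is bounded by a constant $C_1$ (which is false when $\bar r \to 0$ polynomially) and asserts $M = \Omega(Ns)$ (whereas R2 only guarantees $\theta_{aa} > \log^{-1}N$, hence $M = \Omega(Ns/\log N)$). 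So your more honest bookkeeping has exposed a genuine looseness rather than introduced a new error, but as a self-contained proof the $Q$-term step would need either the strengthened hypothesis on $|Q|$ or an additional argument.
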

\begin{proof}
 When $N$ is sufficiently large,
 \begin{align*}
 \bar{L}_P(\tilde{z}) - \bar{L}_P^R(\tilde{z}) &= \sum_{a<b}n_{ab}D(\theta_{ab}\|  \bar{r})= \sum_{a<b, \{a,b\}\in Q} n_{ab}D(\theta_{ab}\|  \bar{r}) + \sum_{a<b,\{a,b\}\notin Q} n_{ab}D(\theta_{ab}\|  \bar{r})\\
 &\le |Q|C_1 + (N(N-1)/2 - \sum_{a=1}^K n_{aa}-|Q|) \frac{Cf(N)}{N} (\log (CNf(N)))\\
 &\le |Q|C_1 + N^2 \frac{Cf(N)}{N} (\log N+\log Cf(N))=o(M).
 \end{align*}
Here $C_1>0$ is some constant. The last equality is due to the fact that $M = \Omega(Ns)$, which is directly implied by Definition 2.2.
 \end{proof}
 
 \begin{lemma} \label{lem:3}
Under the true partition $\tilde z$ and the RMLE $\hat{z}^R$, $\bar{L}_P(\tilde{z}) - \bar{L}_P^R(\hat{z}^R) = o_p(M).$
\end{lemma}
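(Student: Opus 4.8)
The plan is to decompose the quantity $\bar{L}_P(\tilde{z}) - \bar{L}_P^R(\hat{z}^R)$ into three pieces, each controlled by one of the previous results, and to exploit the defining optimality of $\hat{z}^R$ as the maximizer of $L^R(A;\cdot)$. Concretely, I would write
\begin{align*}
\bar{L}_P(\tilde{z}) - \bar{L}_P^R(\hat{z}^R)
&= \big[\bar{L}_P(\tilde{z}) - \bar{L}_P^R(\tilde{z})\big]
+ \big[\bar{L}_P^R(\tilde{z}) - L^R(A;\tilde{z})\big] \\
&\quad + \big[L^R(A;\tilde{z}) - L^R(A;\hat{z}^R)\big]
+ \big[L^R(A;\hat{z}^R) - \bar{L}_P^R(\hat{z}^R)\big].
\end{align*}
The first bracket is $o(M)$ directly by Lemma~\ref{lem:2}. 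The second and fourth brackets are each bounded in absolute value by $\max_z |L^R(A;z) - \bar{L}_P^R(z)|$, which is $o_p(M)$ by Lemma~\ref{lem:1}. The third bracket is $\le 0$ because $\hat{z}^R$ maximizes $L^R(A;\cdot)$ over all partitions $z$, so in particular $L^R(A;\hat{z}^R) \ge L^R(A;\tilde{z})$. Hence the whole expression is at most $o(M) + o_p(M) + 0 + o_p(M) = o_p(M)$ from above.

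For the lower bound (to conclude the difference is genuinely $o_p(M)$ and not merely bounded above), I would argue that $\bar{L}_P(\tilde{z})$ is the global maximizer of $\bar{L}_P(z,\boldsymbol\theta)$ over all $z$ and all $\boldsymbol\theta \in \Theta$ — since $\bar{L}_P(z,\boldsymbol\theta)$ is an expected log-likelihood, it is maximized when $z = \tilde{z}$ and $\boldsymbol\theta = \boldsymbol\theta^{\mathrm{true}}$, so $\bar{L}_P(\tilde{z}) \ge \bar{L}_P(z) \ge \bar{L}_P^R(z)$ for every $z$; in particular $\bar{L}_P(\tilde{z}) \ge \bar{L}_P^R(\hat{z}^R)$, giving the difference $\ge 0$. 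Combining with the upper bound yields $\bar{L}_P(\tilde{z}) - \bar{L}_P^R(\hat{z}^R) = o_p(M)$.

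The main obstacle, I expect, is not the decomposition itself but making sure the two applications of Lemma~\ref{lem:1} are legitimate: the fourth bracket involves the \emph{random} partition $\hat{z}^R$, so one genuinely needs the uniform-over-$z$ control that Lemma~\ref{lem:1} provides rather than a pointwise bound, and I would state this carefully. A secondary point to verify is the claim that $\bar{L}_P$ is globally maximized at $(\tilde{z}, \boldsymbol\theta^{\mathrm{true}})$ — this is the standard fact that expected log-likelihood is maximized at the truth (nonnegativity of KL divergence applied edge-by-edge), but one should note that it holds for the \emph{unrestricted} family $\Theta$, which is exactly why $\bar{L}_P(\tilde z)$ (and not $\bar{L}_P^R(\tilde z)$) sits at the top and the sign of the third bracket works out. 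No further estimates are needed beyond invoking $M = \Omega(Ns)$ and the growth rate $M = \omega(N(\log N)^{3+\delta})$ already recorded in the statements of Lemmas~\ref{lem:1} and~\ref{lem:2}.
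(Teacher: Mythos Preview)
Your proposal is correct and follows essentially the same approach as the paper: the paper adds the nonnegative term $L^R(A;\hat z^R)-L^R(A;\tilde z)$ and then applies the triangle inequality, which unpacks to exactly your four-bracket telescoping with the third bracket dropped by sign, and the nonnegativity argument for the lower bound is identical. Your explicit remark that the uniform bound of Lemma~\ref{lem:1} is needed because $\hat z^R$ is random is a worthwhile clarification the paper leaves implicit.
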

\begin{proof}
First notice that the left hand side is a nonnegative value since $\tilde{z}$ maximizes $\bar{L}_P(\cdot)$ and $\bar{L}_P(\hat{z}^R) \ge \bar{L}_P^R(\hat{z}^R)$.

By adding another positive term, and using Lemma~\ref{lem:1} and Lemma ~\ref{lem:2}:

\begin{align*}
\bar{L}_P(\tilde{z}) - \bar{L}_P^R(\hat{z}^R) & \le \bar{L}_P(\tilde{z}) - \bar{L}_P^R(\hat{z}^R) + L^R(A; \hat{z}^R) - L^R(A,\tilde{z}) \\
&\le |\bar{L}_P(\tilde{z}) - L^R(A,\tilde{z})| +  |\bar{L}_P^R(\hat{z}^R) - L^R(A; \hat{z}^R)| \\
&\le |\bar{L}_P(\tilde{z}) - \bar{L}_P^R(\tilde{z})| + |\bar{L}_P^R(\tilde{z}) - L^R(A,\tilde{z})|+  |\bar{L}_P^R(\hat{z}^R) - L^R(A; \hat{z}^R)| \\
&= o_p(M).
\end{align*}
\end{proof}

To make $N_e(z)$ mathematically tractable, \citet*{choi2010stochastic} introduced the concept of block refinements. The next paragraphs first reintroduce the definition. We then extend this definition to the regularized block refinement. 


\subsection{Partitions and refinements}  
The refinement is the key concept to connect $\bar{L}_P(\tilde{z}) - \bar{L}_P^R(\hat{z}^R)$ with the error rate $N_e(\hat{z}^R)/N$. For this subsection, we first review the concept of partition and refinement.  Then, we give its regularized version. Second, we state the fact that a refinement's log-likelihood is no less than the original partition's log-likelihood. Then, the distance between log-likehood of its (regularized) refinement and log-likelihood of true $\tilde{z}$ can be bounded by the distance between (regularized) log-likelihood of arbitrary $z$ partitions and log-likelihood true $\tilde{z}$.  Finally, the connection between (regularized) refinement log-likelihood and the error rate is established (Lemma~\ref{lem:5}.)

For positive integer $N$, define $[N]$ as the set $\{1, \dots, N\}$.  The partition log-likelihood $\bar L^*_P$ is defined  for any partition $\Pi$ of the indices of a lower triangular matrix, 
\[\Pi: \{(i,j)\}_{i \in [N], j \in [N], i<j} \rightarrow (1, \dots, L). \]  
Define 
\[S_\ell = \left\{(i,j) : \Pi(i,j) = \ell \mbox{ and } i < j\right\} \ \ \mbox{ and } \ \ \bar{\theta_\ell}= |S_\ell|^{-1} \sum_{i<j: \Pi(i,j) = \ell} P_{ij}.\]
The partition log-likelihood is defined as
\[\bar L^*_P(\Pi)= \sum_{i <j} \{ P_{ij} \log\bar\theta_{\Pi(i,j)}+(1-P_{ij}) \log(1- \bar \theta_{\Pi(i,j)})\}.\]

Notice that any class assignment $z$ induces a corresponding partition $\Pi^z$,
\[\Pi^z(i,j) = \ell, \ \mbox{ where } \ell = z_i + (z_j- 1) \cdot K.\]
It is straightforward to show that $ \bar L^*_P(\Pi^z) = \bar L_P(z)$.  

A refinement $\Pi'$ of partition $\Pi$ further divides the partitions in $\Pi$ into subgroups. Formally,
\begin{definition} 
 A \textbf{refinement} $\Pi'$ of partition $\Pi$ satisfies the following condition. 
 \[ \Pi'(i_1, j_1) = \Pi'(i_2, j_2) \ \ \ \Longrightarrow \ \ \ \Pi(i_1, j_1) = \Pi(i_2, j_2), \ \mbox{ for any $i_1 < j_1$ and $i_2 < j_2$.}\]
  \end{definition}
From Lemma A2 in \cite{choi2010stochastic}, 
\begin{equation}\label{refinementineq}
\bar L^*_P(\Pi)\le \bar L^*_P(\Pi')
\end{equation}  This will be essential for for Lemma \ref{lem:5}.
%
%
%
%
%
%

To define $\Pi^*$, a specific refinement of partition $\Pi^z$, we first need to define a set of triples $T$.  The following construction comes directly from \cite{choi2010stochastic}:    
\begin{quote}``For a given membership class under $z$, partition the corresponding set of nodes into subclasses according to the true class assignment $\tilde z$ of each node. Then remove one node from each of the two largest subclasses so obtained, and group them together as a pair; continue this pairing process until no more than one nonempty subclass remains.  Then, terminate.  If pair $(i,j)$ is  chosen from the above procedure, then $z_i=z_j$ and $\tilde{z}_i \neq \tilde{z}_j$."\end{quote}  
Define $C_1$ as the number of $(i,j)$ pairs selected by the above routine.  Notice that at least one of $i$ or $j$ is misclustered.   In fact, $N_e(z)/2 \le C_1 \leq N_e(z)$.  This will be important for Lemma \ref{lem:4} which connects the error rate $N_e(z)/N$ with the refinement. 

Define the set $T$ to contain the triple $(i,j,k)$ if the pair $(i,j)$ was tallied in $C_1$,  and $k \in [N]$ satisfies
\[ D\left(P_{ik}\|  \dfrac{P_{ik}+P_{jk}}{2}\right) + D\left(P_{jk}\|  \dfrac{P_{ik}+P_{jk}}{2}\right) \geq C \dfrac{MK}{N^2}.\]
From assuming Equation \ref{identifyAssumption} , if $(i,j)$ is tallied in $C_1$, then there exists at least one such $k$.  Further, if $z_k = z_\ell$, then $(i,j,\ell)$ is also in $T$.   The set $T$ is essential to defining the refinement partition $\Pi^{*}$ and later the refined regularized partition $\Pi^{*R}$. 

For each $(i,j,k) \in T$, remove $(i,k)$ and $(j,k)$ from their previous subset under $\Pi^z$, and place them into their own, distinct two-element set.  Define the resulting partition as  $\Pi^*$.  Notice that it is a refinement of $\Pi^z$.

\subsection{Regularized partition and regularized refinement}  To extend the analysis to the RMLE, we will define the regularized partition $\Pi^{zR}$ and the associated refinement partition $\Pi^{*R}$.  $\Pi^{zR}$ partitions the nodes into $K+1$ groups;  if $z_i = z_j$, then $\Pi^{zR}(i,j)=z_i$ and if $z_i \ne z_j$, then $\Pi^{zR}(i,j) = K+1$.   It follows from the definition of $\bar L_p^*$ that $\bar L_p^R(z) = \bar L_p^*(\Pi^{zR})$.
 
Construct $\Pi^{*R}$ in the following way: For each $(i,j,k) \in T$, remove $(i,k)$ and $(j,k)$ from their previous subset under $\Pi^{zR}$, and place them into their own, distinct two-element set.  Define the resulting partition as  $\Pi^{*R}$.  Notice that $\Pi^{*R}$ is constructed from $\Pi^{zR}$ in the same way that $\Pi^{*}$ is constructed from $\Pi^{z}$. 
Define $R$ as the set of elements in the off-diagonal block partition that where not removed by the set $T$,
\[R = \big\{ (q , k)  \in [N] \times [N] : z_q \ne z_k , \ (q, x, k) \not \in T, \ (x, q,k) \not \in T, \ \mbox{ for any } x \in [N]\big\}.\]
Notice that $R$ is one group in $\Pi^{*R}$.  Make a refinement $\Pi'$ by subdividing $R$ into ${K \choose 2}$ new groups:
\[\mbox{For } u<v, u \in [K], v \in [K], \mbox{ define } G_{uv} = \left\{ (i,j) \in R : z_i = u, z_j = v \ \mbox{ or } z_i = v, z_j = u\right\}.\]
It follows that $\Pi' = \Pi^*$.  So, $\Pi^*$ is a refinement of $\Pi^{*R}$ and $\Pi^{*R}$ is a refinement for $\Pi^{zR}$.


%
%
%
%
%
%
%
%
%
%
%
%

\begin{lemma}\label{lem:4}(Theorem 3 in \citet*{choi2010stochastic}) For any partition $z$ and $\Pi^*$ being its refinement, if the size of the smallest block $s = \Omega(\frac{MK}{N^2})$, and for any distinct class pairs (a, b), there exists a class c such that Equation \ref{identifyAssumption} holds, then 
\begin{equation}
\bar{L}_P(\tilde{z}) - \bar{L}_P^*(\Pi^*) = \frac{N_e( z)}{N}\Omega(M).
\end{equation}
\end{lemma}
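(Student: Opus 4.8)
The plan is to adapt the argument behind Theorem 3 of \citet*{choi2010stochastic} to the present partition setup, making the bookkeeping work with the triples set $T$ and the identifiability hypothesis \eqref{identifyAssumption}. The key idea is that moving from $\Pi^z$ to its refinement $\Pi^*$ only changes the contribution of those index pairs that are relocated into their own two-element sets, namely the pairs $(i,k)$ and $(j,k)$ for $(i,j,k)\in T$. For a two-element set $\{(i,k),(j,k)\}$ the refined partition uses the average probability $(P_{ik}+P_{jk})/2$, whereas in $\Pi^z$ these two pairs sat in a block whose average probability $\bar\theta$ is some pooled quantity; the gain $\bar L^*_P(\Pi^*) - \bar L^*_P(\Pi^z)$ coming from this one pair is, by a direct Bernoulli-likelihood computation, at least
\[
\tfrac12\!\left[D\!\left(P_{ik}\,\Big\|\,\tfrac{P_{ik}+P_{jk}}{2}\right) + D\!\left(P_{jk}\,\Big\|\,\tfrac{P_{ik}+P_{jk}}{2}\right)\right] - (\text{a small loss on the residual block}).
\]
By the definition of $T$ each such term is at least $C\,MK/N^2$ up to constants.

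First I would make precise the elementary ``splitting lemma'': if a group $S$ in a partition is split into $S_1,\dots,S_m$, then $\bar L^*_P$ increases by $\sum_\ell |S_\ell| D(\bar\theta_{S_\ell}\|\bar\theta_S)$, where $\bar\theta_S,\bar\theta_{S_\ell}$ are the corresponding average probabilities; this is just the non-negativity of a weighted sum of KL divergences and is exactly the mechanism behind \eqref{refinementineq}. Applying this with the moves that build $\Pi^*$ out of $\Pi^z$, and then once more to pass from $\Pi^*$ to the true partition's induced partition $\Pi^{\tilde z}$ (which is a refinement of $\Pi^*$, since within each new two-element set the true labels are constant on each coordinate and within $R$ everything is at least as fine), I can write
\[
\bar L_P(\tilde z) - \bar L^*_P(\Pi^*) \;=\; \bar L^*_P(\Pi^{\tilde z}) - \bar L^*_P(\Pi^*) \;\ge\; 0,
\]
and I need a matching \emph{lower} bound of order $(N_e(z)/N)\,M$. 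The lower bound comes from isolating, inside $\bar L^*_P(\Pi^{\tilde z}) - \bar L^*_P(\Pi^*)$, the contributions of the $C_1$ selected pairs: for each pair $(i,j)$ tallied in $C_1$ there is at least one $k$ (in fact a whole $z$-class worth of them) with the KL sum $\ge C\,MK/N^2$, and these contributions are essentially additive because the relevant two-element sets in $\Pi^*$ are disjoint. Summing over the $C_1$ pairs and using $C_1 \ge N_e(z)/2$ gives a bound of order $N_e(z)\cdot(MK/N^2)\cdot(\text{number of valid }k)$; since the valid $k$ can be taken to fill at least the smallest block, which has size $s = \Omega(MK/N^2)$ is \emph{not} what we want here — rather we get a factor $s$, and $s\cdot MK/N^2$ combined with the count $N_e(z)$ of pairs yields $(N_e(z)/N)\cdot\Omega(M)$ after absorbing $K s/N \ge$ const (indeed $Ks\le N$ but the number of usable $k$'s is $\Omega(s)$ and $s K/N$ is bounded below only when blocks are balanced, so more carefully one counts $\Omega(N)$ triples per tallied pair by taking all $k$ in any one full block and noting $N/K$ such blocks exist, giving the clean $\Omega(M)\,N_e(z)/N$). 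I would do this counting carefully, mirroring \citet*{choi2010stochastic}.

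The main obstacle I anticipate is the second, lower-bound direction: controlling the \emph{loss} incurred on the ``residual'' parts of the blocks after the pairs $(i,k),(j,k)$ are extracted, and making sure that the per-pair KL gains genuinely add up rather than partially cancel against changes in the pooled averages $\bar\theta$ of the diminished blocks. The clean way around this is to not compare $\Pi^z$ with $\Pi^*$ directly but to compare $\Pi^*$ with $\Pi^{\tilde z}$ via the splitting lemma (so every step only \emph{adds} a non-negative KL term, no cancellation), and then to lower-bound that sum by retaining only the terms indexed by the extracted two-element sets $\{(i,k),(j,k)\}$, each of which contributes $\tfrac12[D(P_{ik}\|m)+D(P_{jk}\|m)]$ with $m=(P_{ik}+P_{jk})/2$. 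Because these sets are pairwise disjoint across $(i,j,k)\in T$, their KL contributions are a genuine sum, bounded below by $|T|$ times $C MK/N^2$; relating $|T|$ to $N_e(z)$ (each tallied pair contributes $\Omega(N/K\cdot (N/K))$ triples, or more simply $\Omega(s)$, and there are $\ge N_e(z)/2$ tallied pairs) closes the argument. The remaining work is purely the arithmetic of turning $|T|\cdot MK/N^2$ into $(N_e(z)/N)\,\Omega(M)$, which is routine given the balance-type bounds implied by Definition \ref{def:hsbm} and the hypothesis $s=\Omega(MK/N^2)$.
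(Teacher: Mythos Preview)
Your approach is essentially the paper's (which in turn is just Theorem~3 of \citet*{choi2010stochastic}), but you have over-engineered it and introduced one incorrect step along the way.

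The paper's entire proof is the single identity
\[
\bar L_P(\tilde z) - \bar L_P^*(\Pi^*) \;=\; \sum_{i<j} D\bigl(P_{ij}\,\big\|\,\bar\theta_{\Pi^*(i,j)}\bigr),
\]
which holds because under the true assignment $\tilde z$ the block averages equal the $P_{ij}$'s themselves, so $\bar L_P(\tilde z)=-\sum_{i<j}H(P_{ij})$. This is an exact sum of non-negative KL terms; one then discards all but the terms coming from the two-element cells $\{(i,k),(j,k)\}$ of $\Pi^*$, each of which contributes at least $C\,MK/N^2$ by \eqref{identifyAssumption}, and counts $C_1\cdot\Omega(s)$ such cells to obtain $C_1\,\Omega(s\,MK/N^2)=\tfrac{N_e(z)}{N}\,\Omega(M)$.

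Your worry about ``residual losses'' and cancellation is therefore unfounded: the identity above is exact and termwise non-negative, so nothing cancels. More importantly, your justification via a splitting lemma applied ``from $\Pi^*$ to $\Pi^{\tilde z}$'' is wrong, because $\Pi^{\tilde z}$ is \emph{not} a refinement of $\Pi^*$ in general: a two-element cell $\{(i,k),(j,k)\}$ of $\Pi^*$ typically intersects two large cells of $\Pi^{\tilde z}$ (those indexed by $(\tilde z_i,\tilde z_k)$ and $(\tilde z_j,\tilde z_k)$), and those large cells contain many other pairs lying in different cells of $\Pi^*$. Fortunately you do not need this refinement relation at all; the KL identity above follows directly from definitions. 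Your final counting paragraph is also somewhat tangled (the discussion of $sK/N$ and ``$\Omega(N)$ triples per tallied pair'' is confused); the clean bookkeeping is simply: $C_1\ge N_e(z)/2$ tallied pairs, each yielding at least $s$ triples (one for every node in the identifying true class $c$), each triple worth $\Omega(MK/N^2)$, exactly as the paper writes.
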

\begin{proof} 
\[\bar L_p(\tilde  z) - \bar L_p^*(\Pi^*)= \sum_{i<j} D(P_{ij}||\bar \theta_{\Pi(i,j)} )=C_1 \Omega\left(s \ \frac{ MK}{N^2}\right) = \frac{N_e(z)}{N} \Omega(M)\]
\end{proof}

\begin{lemma}\label{lem:5} Let $\Pi^{\hat{z}^R}$ be the partition corresponding to $\hat{z}^R$ (the regularized block estimator).  Let ${\Pi'}$ be the refinement of $\Pi^{\hat{z}^R}$, and let $\Pi'^{R}$ be the regularized refinement of $\Pi^{\hat{z}^R}$. 
\begin{equation}
\bar{L}_P(\tilde{z}) - \bar{L}_P^R(\hat{z}^R) \ge  \bar{L}_P(\tilde{z}) - \bar{L}^*_P({\Pi}'^{R}) \ge 
\bar{L}_P(\tilde{z}) - \bar{L}_P^*({\Pi'}).
\end{equation}
\end{lemma}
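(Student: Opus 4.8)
The plan is to reduce both inequalities in the statement to a single structural fact: the three partitions $\Pi^{\hat z^R R}$ (the regularized partition induced by $\hat z^R$), $\Pi'^R$, and $\Pi'$ form a chain of successive refinements, to which the monotonicity inequality (\ref{refinementineq}) of \cite{choi2010stochastic} applies twice. First I would rewrite the leftmost term using the identity $\bar L_P^R(\hat z^R) = \bar L_P^*(\Pi^{\hat z^R R})$, which was recorded immediately after the definition of $\Pi^{zR}$; this puts all three quantities $\bar L_P^R(\hat z^R)$, $\bar L_P^*(\Pi'^R)$, $\bar L_P^*(\Pi')$ on the same footing, namely as partition log-likelihoods of the form $\bar L_P^*$.

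Next I would invoke the refinement relationships established in the construction of $\Pi'^R$ and $\Pi'$. By construction, $\Pi'^R$ is obtained from $\Pi^{\hat z^R R}$ exactly as $\Pi^{*R}$ is obtained from $\Pi^{zR}$ — by removing each pair $(i,k),(j,k)$ with $(i,j,k) \in T$ and placing it into its own two-element block — so $\Pi'^R$ is a refinement of $\Pi^{\hat z^R R}$. Likewise, $\Pi' = \Pi^*$ is obtained from $\Pi'^R$ by subdividing the single leftover off-diagonal block $R$ into the groups $G_{uv}$, so $\Pi'$ is a refinement of $\Pi'^R$. Applying (\ref{refinementineq}) along this chain yields $\bar L_P^*(\Pi^{\hat z^R R}) \le \bar L_P^*(\Pi'^R) \le \bar L_P^*(\Pi')$, that is, $\bar L_P^R(\hat z^R) \le \bar L_P^*(\Pi'^R) \le \bar L_P^*(\Pi')$.

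Finally I would negate this chain of inequalities and add $\bar L_P(\tilde z)$ throughout; this reverses the inequalities and produces exactly $\bar L_P(\tilde z) - \bar L_P^R(\hat z^R) \ge \bar L_P(\tilde z) - \bar L_P^*(\Pi'^R) \ge \bar L_P(\tilde z) - \bar L_P^*(\Pi')$, which is the claim. The only step that warrants care — and the place I expect a careful reader to check — is verifying that the chain of refinements genuinely holds: specifically that $R$ really is one block of $\Pi'^R$ (so that subdividing it into the $G_{uv}$ gives a legitimate refinement) and that the two-element sets pulled out by the triples in $T$ are identical in $\Pi'^R$ and $\Pi'$. Both of these were arranged in the preceding subsection, so no new estimate or probabilistic argument is needed; the lemma is purely combinatorial bookkeeping layered on top of the refinement monotonicity inequality.
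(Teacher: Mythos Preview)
Your proposal is correct and follows essentially the same approach as the paper: both inequalities are obtained by applying the refinement monotonicity inequality (\ref{refinementineq}) along the chain $\Pi^{\hat z^R R} \preceq \Pi'^R \preceq \Pi'$, then subtracting from $\bar L_P(\tilde z)$. Your write-up is in fact slightly more explicit than the paper's, since you spell out the identity $\bar L_P^R(\hat z^R)=\bar L_P^*(\Pi^{\hat z^R R})$ and verify the refinement chain, whereas the paper simply asserts the two refinement relations.
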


\begin{proof}
Recall that taking a refinement increases the partition log-likelihood.
The first inequality is due to the fact that ${\Pi}'^R$ is a refinement of the partition $\Pi^{\hat z^R} $.  The second inequality follows from the fact that ${\Pi'}$ is a refinement of ${\Pi}'^R$.  
\end{proof}

\noindent \textbf{Proof of main theorem}: 
The conditions  in Lemma~\ref{lem:4} are satisfied by the highest dimensional asymptotic setting assumption. By Lemma~\ref{lem:3}, \ref{lem:4}, \ref{lem:5}, we have: 
\[o_p(M) = \bar{L}_P(\tilde{z}) - \bar{L}_P^R(\hat{z}^R) \ge \bar{L}_P(\tilde{z}) - \bar{L}_P^*(\Pi') = \frac{N_e(\hat{z}^R)}{N}\Omega(M). \ \mbox{ Hence } \frac{N_e(\hat{z}^R)}{N} = o_p(1).\]

\par


\subsubsection*{Acknowledgments}
Thanks to Sara Fernandes-Taylor for helpful comments.  Research of KR is supported by a grant from the University of Wisconsin.  Research of TQ is supported by NSF Grant DMS-0906818 and NIH Grant EY09946.
\par

\bibliographystyle{plainnat}
\bibliography{cite}

\vskip .65cm
\noindent
Karl Rohe
\vskip 2pt
\noindent
E-mail: karlrohe@stat.wisc.edu
\vskip 2pt
\noindent
Tai Qin
\vskip 2pt
\noindent
E-mail: qin@stat.wisc.edu
\vskip 2pt
\noindent
Haoyang Fan
\vskip 2pt
\noindent
E-mail: haoyang@stat.wisc.edu

\vskip .3cm
\end{document}